\documentclass[11pt, reqno]{amsart}
\usepackage{amsmath, amsthm, amscd, amsfonts, amssymb, mathtools, color, hyperref}

\usepackage[dvipsnames]{xcolor}

\newtheorem{theorem}{Theorem}[section]
\newtheorem{lemma}[theorem]{Lemma}
\newtheorem{proposition}[theorem]{Proposition}
\newtheorem{corollary}[theorem]{Corollary}
\newtheorem{observation}[theorem]{Observation}

\theoremstyle{definition}
\newtheorem{definition}[theorem]{Definition}
\newtheorem{example}[theorem]{Example}
\theoremstyle{remark}

\numberwithin{equation}{section}

\newcommand{\T}{\mathbb{T}}
\newcommand{\Ti}{\mathbb{T}[i]}
\newcommand{\op}{\oplus}
\newcommand{\ot}{\otimes}

\newcommand{\eps}{\varepsilon}
\newcommand{\PhiSet}{\Phi}

\newcommand{\E}{\mathcal{E}}

\title[Lie's Theorem]{Lie's Theorem for Supertropical Algebra}

\author{ Himadri Mukherjee$^{1,\ast}$}
\address{$^{1}$ Department of Mathematics, BITS Pilani K. K. Birla Goa Campus, Goa, India}
\email{himadrim@goa.bits-pilani.ac.in}
\author[Askar]{Askar Ali M$^{2}$}
\address{$^{2}$ Azim Premji University, Bhopal, India}
\email{askar.m@apu.edu.in}

\thanks{$^\ast$Corresponding author}

\subjclass[2020]{Primary: 15A80, 17B45, 22E45, 17B30   Secondary: 37C25 }
\keywords{Max-plus algebra; Lie algebra; Lie's theorem; Nilpotent matrix}

\begin{document}

\noindent

\begin{abstract}
The aim of this paper is to prove a version of Lie's theorem for the supertropical algebra. 
\end{abstract}

\maketitle
 
\section{Introduction}

Tropical (max-plus) mathematics replaces the classical operations of addition and multiplication by
\[
a \oplus b := \max\{a,b\},\qquad a \otimes b := a+b,
\]
typically on the set $\T:=\mathbb{R}\cup\{-\infty\}$ (with additive identity $\varepsilon=-\infty$ and multiplicative identity $0$).
This ``max--plus'' algebra provides an effective linear-algebraic language for problems that are nonlinear over fields but become linear over $\T$, with applications ranging from discrete-event systems and scheduling to optimization and dynamical systems
(see, for example, \cite{BCOQ-book,Butkovic-book,HOvdW-book,AkianBapatGaubert-HLA}).
In this setting, $n\times n$ matrices over $\T$ form a semiring under tropical matrix addition (entrywise $\oplus$) and tropical matrix multiplication (using $\oplus$ and $\otimes$),
and the resulting ``linear'' theory is tightly intertwined with weighted directed graphs:
to a matrix $A=(a_{ij})$ one associates a digraph with an edge $i\to j$ whenever $a_{ij}\neq \varepsilon$.
Many structural properties of tropical matrices admit graph-theoretic interpretations, and spectral notions are governed by directed cycles and their weights \cite{BCOQ-book,Butkovic-book}.

A basic limitation of $\T$ is that $\oplus$ is idempotent, so ``ties'' (multiple maximizers) are collapsed, and there are no additive inverses.
Supertropical algebra, initiated by Izhakian and collaborators, enriches tropical algebra by adjoining \emph{ghost} elements that record such degeneracies and restore a more robust algebraic structure for polynomials, matrices, and linear algebra \cite{IR-Supertropical-algebra,IR-Supertropical-matrix,IR-Supertropical-linear}.
Motivated by these developments, in this paper we work with a concrete supertropical extension $\T[i]$ obtained by adjoining a central symbol $i$ with $i^2=0$ and distinguishing a ghost (``zero'') ideal $\Phi\subset \T[i]$ (defined precisely in \S2).
This framework is well-suited for formulating matrix-theoretic notions such as nilpotency and for exploiting the close connection between tropical products and directed walks in graphs.

The purpose of the present note is to prove an analogue of \emph{Lie's theorem} in this supertropical setting.
Recall that, over an algebraically closed field, Lie's classical theorem asserts that every finite-dimensional solvable Lie algebra of matrices can be simultaneously upper triangularized; in particular, a nilpotent Lie algebra of matrices is simultaneously strictly upper triangular in a suitable basis \cite{Humphreys-Lie}.
In tropical and supertropical algebra one cannot form the usual commutator $AB-BA$, so one must choose a substitute for the Lie bracket.
Here we consider the bilinear bracket on $M_n(\T[i])$ given by
\[
[A,B]\;:=\;A\otimes B\;\oplus\;B\otimes A,
\]
and we call a $\T[i]$-linear subspace $G\subseteq M_n(\T[i])$ a (supertropical) Lie algebra if it is closed under this operation.
We focus on the nilpotent case, defined via the lower central series in the natural way.

Our main result shows that nilpotent supertropical Lie algebras admit a simultaneous strict upper-triangular form after a permutation change of coordinates (see Theorem~\ref{thm:lie-T[i]}). More precisely, we prove that if $G\subseteq M_n(\T[i])$ is nilpotent, then there exists a permutation matrix $P$ such that
\[
P^{-1}\otimes G\otimes P \;\subseteq\; \mathcal{U}_n,
\]
where $\mathcal{U}_n$ denotes the set of strictly upper triangular matrices (equivalently, every element of $G$ becomes strictly upper triangular after relabeling the basis).
This can be viewed as a tropical/supertropical counterpart of the strict upper-triangularization conclusion for nilpotent Lie algebras in the classical theory \cite{Humphreys-Lie}. As a corollary, the same conclusion holds for nilpotent Lie algebras of max--plus matrices over $\T$ (Corollary~\ref{cor:lie-T}).

\section{Preliminaries}

\begin{definition}[Max--plus semifield]
Let $\T=\mathbb{R}\cup\{-\infty\}$ with
\[
a\oplus b=\max\{a,b\},\qquad a\otimes b=a+b
\]
(where $+$ is the usual addition on $\mathbb{R}$). The additive identity is $\eps=-\infty$ and
the multiplicative identity is 0.
\end{definition}

\begin{definition} 
The super-tropical extension of $\T$ is defined as follows. Adjoin a central symbol $i$ satisfying $i^2=0$. Set
\[
\T[i]=\{\,a+ib:\ a,b\in \T\,\},
\]
where the display ``$a+ib$'' denotes pairing (not tropical addition).
For $x=a+ib$ and $y=c+id$, define
\[
x\oplus y=(a\oplus c)+i(b\oplus d),
\]
\[
x\otimes y=\bigl((a\otimes c)\oplus(b\otimes d)\bigr)+
i\bigl((b\otimes c)\oplus(a\otimes d)\bigr).
\]
The additive zero is $\eps=\eps+i\eps$.
\end{definition}

\begin{observation}\label{obs:nozerodiv}
If $x,y\in \T[i]$ and $x\otimes y=\eps$, then $x=\eps$ or $y=\eps$.
In particular, if $x\neq \eps$ and $y\neq \eps$, then $x\otimes y\neq \eps$.
\end{observation}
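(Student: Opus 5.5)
The plan is to prove the contrapositive. Write $x=a+ib$ and $y=c+id$, and assume $x\neq\eps$ and $y\neq\eps$. Then at least one of $a,b$ is a real number, i.e.\ different from $-\infty$, and likewise at least one of $c,d$ is real. We need to show that $x\otimes y\neq\eps$, which means showing that at least one of the two components
\[
(a\otimes c)\oplus(b\otimes d)\qquad\text{and}\qquad (b\otimes c)\oplus(a\otimes d)
\]
is different from $-\infty$.

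The key fact comes from $\T$ itself. In $\T$, $u\otimes v=u+v$ equals $-\infty$ only if $u=-\infty$ or $v=-\infty$, because the sum of two reals is real. Also, $u\oplus v=\max\{u,v\}$ is $-\infty$ only if both $u$ and $v$ are $-\infty$. So it is enough to find a single product among $a\otimes c$, $b\otimes d$, $b\otimes c$, $a\otimes d$ whose two factors are both real.

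We argue by cases. Choose $p\in\{a,b\}$ with $p$ real and $q\in\{c,d\}$ with $q$ real. Each of the four pairings $(p,q)\in\{a,b\}\times\{c,d\}$ appears in exactly one of the two components above: $(a,c)$ and $(b,d)$ lie in the real component, while $(b,c)$ and $(a,d)$ lie in the $i$-component. Hence $p\otimes q$ is real and is one of the terms being maximized in one component. That component is therefore at least $p\otimes q>-\infty$, so $x\otimes y\neq\eps$.

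No step here is a real obstacle. The only point needing care is checking that the four cross-products cover all pairings, which the bookkeeping in the case analysis settles. The ``in particular'' clause is just this contrapositive restated.
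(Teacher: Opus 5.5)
Your proof is correct and complete. The paper states this observation without proof, and your componentwise argument (each of the four pairings in $\{a,b\}\times\{c,d\}$ occurs as a $\otimes$-term in exactly one component, and a single real term keeps a $\max$ above $-\infty$) is exactly the routine verification the paper leaves implicit.
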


\begin{definition}
Dfine the set $\Phi=\{\,a+ia:\ a\in\T\,\}\subset\Ti$. We regard $\PhiSet$ as the ``zero" set. Note that $\eps=\eps+i\eps\in\PhiSet$.
\end{definition}

\begin{observation}\label{Ob:phi-absorb} We have the following facts for the above mentioned zero set.
\begin{itemize}
    \item[(a)] If $x\in\PhiSet$ and $y\in\Ti$, then $x\ot y\in\PhiSet$ and $y\ot x\in\PhiSet$. Moreover, if $x,y\in\PhiSet$, then $x\op y\in\PhiSet$.
    \item[(b)] If $x,y\in\Ti$ and $x\ot y=\eps$, then $x=\eps$ or $y=\eps$.
\end{itemize}

\end{observation}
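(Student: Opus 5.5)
Both parts reduce to coordinatewise facts about max over $\T$: write the elements as $x=a+ib$ and $y=c+id$ with $a,b,c,d\in\T$, and compute each claim directly from the definitions of $\oplus$ and $\otimes$ on $\Ti$.

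For (a), let $x=a+ia\in\PhiSet$ and $y=c+id$ arbitrary. By the definition of $\otimes$,
\[
x\ot y=\bigl((a\ot c)\op(a\ot d)\bigr)+i\bigl((a\ot c)\op(a\ot d)\bigr).
\]
The real part and the $i$-part are literally the same element of $\T$, since each is $\max\{a+c,\,a+d\}$; hence $x\ot y\in\PhiSet$. Because $\otimes$ on $\Ti$ is commutative (the formula is symmetric under swapping $(a,b)\leftrightarrow(c,d)$, using commutativity of $\op$ and $\ot$ in $\T$), we also get $y\ot x=x\ot y\in\PhiSet$. For the additive statement, take $x=a+ia$ and $y=c+ic$. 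Then $x\op y=(a\op c)+i(a\op c)\in\PhiSet$ immediately. One should check that the conventions $-\infty+r=-\infty$ cause no trouble; they do not, since both components are computed by the same expression.

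For (b), this is Observation~\ref{obs:nozerodiv}, but I would prove it directly. Suppose $x\ot y=\eps$, i.e.\ $(a\ot c)\op(b\ot d)=-\infty$ and $(b\ot c)\op(a\ot d)=-\infty$. A maximum in $\T$ equals $-\infty$ exactly when every argument is $-\infty$, and $u+v=-\infty$ in $\T$ exactly when $u=-\infty$ or $v=-\infty$ (no nonzero divisors in $\T$). So all four products $ac,bd,bc,ad$ vanish, where juxtaposition means $\ot$ in $\T$. Assume $x\neq\eps$; say $a\neq-\infty$ (the case $b\neq-\infty$ is symmetric). Then $ac=ad=-\infty$ forces $c=d=-\infty$, i.e.\ $y=\eps$.

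The only mildly delicate point is remembering that there are no real obstacles beyond handling $-\infty$ correctly: max vanishes iff all terms do, and a sum is $-\infty$ iff some summand is. I expect no genuine difficulty; the key step is the four-product case analysis in (b).
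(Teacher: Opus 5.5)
Your proof is correct. It is the same direct coordinatewise verification from the definitions of $\op$ and $\ot$ that the paper leaves implicit: the paper states this observation, and Observation~\ref{obs:nozerodiv}, without a written proof. Your explicit check that $\ot$ on $\Ti$ is commutative, and the four-product case analysis in (b), supply exactly the routine details the paper omits.
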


\begin{definition}
    Let $A \in M_n(\T[i])$. We say $A$ is nilpotent, if $A^k = \E$, for some $k \in \mathbb{N}$, where $\E$ denotes the all-$\eps$ `zero' matrix. 
\end{definition}

\begin{definition}\label{def:digraph}
For $A=(a_{ij})\in M_n(\T[i])$, the directed graph $G_A$ has vertex set
$V=\{1,\dots,n\}$ and a directed edge $i\to j$ iff $a_{ij}\neq \varepsilon$.
\end{definition}

\begin{theorem}\label{thm:nilpDAG}
Let $A\in M_n(\T[i])$. Then $A$ is nilpotent if and only if $G_A$ is a directed acyclic graph (DAG), including no self-loops. Further, spectrum of $A$ is $ \{\varepsilon\}$.
\end{theorem}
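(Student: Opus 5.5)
The plan is to use the standard walk expansion of matrix powers, together with Observation~\ref{obs:nozerodiv}, to rule out any cancellation. Write $A=(a_{ij})$. By induction on $k$,
\[
(A^{k})_{ij}\;=\;\bigoplus_{j_1,\dots,j_{k-1}} a_{i j_1}\otimes a_{j_1 j_2}\otimes\cdots\otimes a_{j_{k-1} j},
\]
which is a $\oplus$-sum over all walks $i=j_0\to j_1\to\cdots\to j_k=j$ of length $k$ in the complete digraph on $\{1,\dots,n\}$. This needs associativity and distributivity in $\T[i]$. Both follow directly from the definition, since $\T[i]$ is the semiring $\T[x]/(x^2=0)$ written in pairs.

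The key lemma is a no-cancellation principle: a $\oplus$-sum in $\T[i]$ equals $\eps$ iff every summand equals $\eps$. This holds because $\oplus$ is componentwise $\max$, and a max of elements of $\T$ equals $-\infty$ iff all of them do. By Observation~\ref{obs:nozerodiv} and induction, a product $a_{ij_1}\otimes\cdots\otimes a_{j_{k-1}j}$ is non-$\eps$ iff every factor is non-$\eps$, that is, iff the walk uses only edges of $G_A$. Hence $(A^k)_{ij}\neq\eps$ iff $G_A$ contains a walk of length $k$ from $i$ to $j$.

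Both directions follow from this characterization. If $G_A$ is a DAG with no self-loops, every walk is a path, so no walk has length $\ge n$; therefore $A^n=\E$ and $A$ is nilpotent. Conversely, if $G_A$ has a directed cycle, including a self-loop $i\to i$, then traversing it repeatedly gives walks of every length $k\ge1$ from a vertex on the cycle back to itself. Hence $(A^k)_{ii}\neq\eps$ for all $k$, and $A$ is not nilpotent.

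For the spectral claim I would read ``$\lambda$ is an eigenvalue'' as the existence of $v\neq\E$ (a non-$\eps$ vector) with $A\otimes v=\lambda\otimes v$. This is the notion the paper appears to intend, but I would state the interpretation explicitly. Note that $\lambda=\eps$ occurs: if $G_A$ is a DAG, some vertex $j$ is a source, that is, column $j$ is all $\eps$, and then $v=e_j$ (entry $0$ at $j$, $\eps$ elsewhere) satisfies $A\otimes v=\E$.

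For $\lambda\neq\eps$, iterating gives $A^n\otimes v=\lambda^{\otimes n}\otimes v$. The left side is $\E$. On the right, $\lambda^{\otimes n}\neq\eps$ by Observation~\ref{obs:nozerodiv}, and some entry $v_j\neq\eps$, so $\lambda^{\otimes n}\otimes v_j\neq\eps$, again by that observation. This is a contradiction, so the spectrum is exactly $\{\eps\}$.

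The main obstacle is the no-cancellation step. One must check carefully that $\T[i]$-multiplication has no zero divisors, which is exactly Observation~\ref{obs:nozerodiv}, and that the $\oplus$-sum cannot annihilate non-$\eps$ terms. Unlike $\Phi$-valued (ghost) entries, which could be considered ``zero'' in the supertropical sense, the definition of nilpotency here uses the genuine $\E$, so ghosts do not interfere. The spectral statement also depends on fixing the eigenvalue convention above.
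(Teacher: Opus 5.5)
Your proof of the equivalence follows the paper's route: the walk expansion of $(A^k)_{pq}$, no zero divisors (Observation~\ref{obs:nozerodiv}), and the fact that a $\oplus$-sum is $\eps$ only if every term is. You make that last no-cancellation step explicit, whereas the paper uses it tacitly.

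One step is false as written. Repeating a cycle of length $m$ gives closed walks only of lengths $mr$, not of every length $k\ge 1$. So "$(A^k)_{ii}\neq\eps$ for all $k$" fails unless the cycle is a self-loop. For example, if $a_{12}=a_{21}=0$ and all other entries are $\eps$, then $(A^k)_{11}=\eps$ for every odd $k$. The conclusion survives with either of two fixes:
\begin{itemize}
\item As in the paper: suppose $A^k=\E$ and choose $r$ with $mr\ge k$. Then $A^{mr}=A^{mr-k}\otimes A^{k}=\E$, which contradicts $(A^{mr})_{v_0v_0}\neq\eps$.
\item Using your own characterization: walking $k$ steps around the cycle from $v_0$ gives a walk of length $k$ ending at $v_{k \bmod m}$. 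Hence $(A^k)_{v_0\,v_{k \bmod m}}\neq\eps$ and $A^k\neq\E$ for every $k$.
\end{itemize}

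The paper's proof never addresses the spectral sentence, so your last part supplies something that is missing there. Under your convention (exact equality $A\otimes v=\lambda\otimes v$ with $v\neq\E$) the argument is correct. A source $j$ of the DAG has column $j$ entirely $\eps$, so $A\otimes e_j=\E$ and $\eps$ is an eigenvalue. Any $\lambda\neq\eps$ is excluded by comparing $A^n\otimes v=\E$ with $\lambda^{\otimes n}\otimes v\neq\E$.

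You were right to fix the convention explicitly. Consider instead a $\Phi$-based notion, which is natural here since $\Phi$ is declared the zero set: $\lambda$ is an eigenvalue if $A\otimes v\oplus i\otimes\lambda\otimes v\in\Phi^n$ for some admissible $v$. Then already $A=(\eps)$ with $v=(0)$ has every $\lambda\in\Phi$ as an eigenvalue, by Observation~\ref{Ob:phi-absorb}(a). So the claim that the spectrum is $\{\eps\}$ holds only for exact eigenvalues, or if it is read modulo $\Phi$.
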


\begin{proof}
($\Rightarrow$) Suppose $A^k=\E$ for some $k$. If $G_A$ had a directed cycle
$v_0\to v_1\to\cdots\to v_{m-1}\to v_0$, then for every $r\ge 1$ there is a directed walk
of length $mr$ from $v_0$ to itself obtained by repeating the cycle $r$ times.
In the tropical matrix product, the $(v_0,v_0)$-entry of $A^{mr}$ is an $\oplus$-sum of
$\otimes$-products along all directed walks of length $mr$; in particular it contains the
product of the $mr$ edge-labels along the repeated cycle.
Since each cycle edge corresponds to a non-$\eps$ entry of $A$, Observation~\ref{obs:nozerodiv}
implies that this product is non-$\eps$, hence $(A^{mr})_{v_0v_0}\neq \eps$.
This contradicts $A^{k}=\E$. Thus $G_A$ is acyclic.

($\Leftarrow$) Conversely, assume $G_A$ is a DAG. Since $G_A$ is finite, it has a maximal
directed path length $L$. Then there is no directed walk of length $L+1$.
But $(A^{L+1})_{pq}$ is precisely the $\oplus$-sum of $\otimes$-weights of all directed walks
of length $L+1$ from $p$ to $q$; hence $(A^{L+1})_{pq}=\eps$ for all $p,q$.
Therefore $A^{L+1}=\E$, so $A$ is nilpotent.
\end{proof}

\begin{definition}\cite{Deograph}
Let $G = (V,E)$ be a finite directed graph, with $|V| = n$ and edge set $E$. A \textit{topological ordering} 
$\preceq$ of $G$ is a bijection $\pi:V \rightarrow \{1,\dots, n\}$ (an ordering of the vertices $v_1,\dots,v_n$) 
such that for every edge $v_i \to v_j$ in $E$, with distinct vertices $v_i, v_j$, we have $\pi(v_i) < \pi(v_j)$ (each pair of edge points  
moves forward in the ordering).
\end{definition}

\begin{proposition}\label{lem:toposort}[(Theorem $14-4$ of \cite{Deograph})]
A finite directed graph admits a topological ordering if and only if it is acyclic. In that case, there exists a bijection
$\ell:V\to\{1,\dots,n\}$ such that every edge $u\to v$ satisfies $\ell(u)<\ell(v)$.
\end{proposition}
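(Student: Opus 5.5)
We prove both directions, treating the easy one (an ordering forces acyclicity) first and then building the ordering for an acyclic graph.

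\emph{Ordering implies acyclic.} Suppose a topological ordering $\ell$ exists and $v_0\to v_1\to\cdots\to v_{m-1}\to v_0$ is a directed cycle through distinct vertices with $m\ge 2$. Then
\[
\ell(v_0)<\ell(v_1)<\cdots<\ell(v_{m-1})<\ell(v_0),
\]
which is a contradiction. A self-loop $v\to v$ would force $\ell(v)<\ell(v)$. The definition quoted above only constrains edges between distinct vertices, so we read the proposition's conclusion (every edge $u\to v$ satisfies $\ell(u)<\ell(v)$) as the intended convention, with loops counted as cycles. This is the convention needed later, together with Theorem~\ref{thm:nilpDAG}.

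\emph{Acyclic implies ordering.} We argue by induction on $n=|V|$. The case $n=0$, or $n=1$ with no loop, is trivial.

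The key step is that a finite DAG with $n\ge 1$ has a vertex of in-degree zero, i.e.\ a source. Suppose every vertex has an incoming edge. Start at any $w_0$, choose $w_1\to w_0$, then $w_2\to w_1$, and so on. This gives a backward walk of length $n$ visiting $n+1$ vertices. By pigeonhole some vertex repeats, $w_a=w_b$ with $a<b$. Then $w_b\to w_{b-1}\to\cdots\to w_a$ is a closed directed walk, and it contains a directed cycle (a loop if $b=a+1$). This contradicts acyclicity.

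Let $s$ be a source and set $\ell(s)=1$. The induced subgraph on $V\setminus\{s\}$ is still acyclic, since any cycle in it would be a cycle in $G$. By induction it has an ordering $\ell'$ onto $\{1,\dots,n-1\}$; put $\ell(v)=\ell'(v)+1$ for $v\ne s$. This $\ell$ is a bijection onto $\{1,\dots,n\}$. Check the edges $u\to v$:
\begin{itemize}
\item If both endpoints differ from $s$, the inequality $\ell(u)<\ell(v)$ is inherited from $\ell'$.
\item An edge into $s$ is impossible because $s$ is a source.
\item An edge $s\to v$ satisfies $\ell(s)=1<\ell(v)$.
\end{itemize}

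The main point needing care is the existence of a source, which is the pigeonhole argument above. The rest is routine induction.

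Since the result is cited from \cite{Deograph}, a short self-contained version along these lines is enough for the paper. It will be applied to $G_A$ to obtain the permutation $P$ that makes $A$ strictly upper triangular.
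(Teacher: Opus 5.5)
Your proof is correct. The paper does not prove this proposition itself; it only cites Theorem 14-4 of \cite{Deograph}, so there is no in-paper argument to set yours against. Your argument supplies the standard self-contained proof: a source exists by a backward walk and pigeonhole, you label it $1$, and you shift an ordering of the remaining induced subgraph.

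Your version also makes the self-loop convention explicit, which the citation does not. The paper's definition of topological ordering only constrains edges between distinct vertices. Under it, a single vertex carrying a loop admits a topological ordering, so ``ordering $\Rightarrow$ acyclic'' fails if loops count as cycles. If loops do not count as cycles, the second sentence of the proposition fails for loops instead. Your reading is the consistent one: every edge, loops included, must satisfy $\ell(u)<\ell(v)$, and loops count as cycles. It also matches the ``DAG, including no self-loops'' of Theorem~\ref{thm:nilpDAG}. Your source argument genuinely needs this convention. Without it, the backward walk could stall at a looped vertex, and the closed walk it produces would not contradict acyclicity.

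One small contextual correction to your closing remark. In the proof of Theorem~\ref{thm:lie-T[i]} the proposition is applied to $G_C$, the digraph of the dominant element $C=\bigoplus A^{(i,j)}$, not to an individual $G_A$. An ordering adapted to a single $G_A$ would not in general triangularize all of $\mathcal{G}$ simultaneously.
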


\begin{definition}
    For $A, B \in M_n(\T[i])$, define the Lie bracket as $[A,B] = AB \oplus BA$. A linear subspace $\mathcal{G} \subset M_n(\T[i])$ is called a Lie algebra, if for every $A,B \in \mathcal{G}$, $[A,B] \in \mathcal{G}$. Further, a linear subspace $\mathcal{I} \subset \mathcal{G}$ is called an ideal, if for any $X \in \mathcal{G}$, and $A \in \mathcal{I}$, $[X,A] \in \mathcal{I}$. 
\end{definition}
\begin{definition}[Lower central series and nilpotent Lie algebra]
Define
\[
D^{0}(\mathcal{G})=\mathcal{G},\qquad
D^{k}(\mathcal{G})=[\,\mathcal{G},\ D^{k-1}(\mathcal{G})\,]\quad(k\ge 1).
\]
We say $\mathcal{G}$ is \emph{nilpotent} if $D^{m}(\mathcal{G})=\{\E\}$ for some $m$.
\end{definition}
\textbf{Remark:} In the super-tropical Lie algebra framework as well, nilpotency implies solvability. By letting the derived series  $\mathcal{G}^{(0)}=\mathcal{G}$, $\mathcal{G}^{(k+1)}=[\mathcal{G}^{(k)}, \mathcal{G}^{(k)}]$, the bracket monotonicity (if $U_1\subseteq U_2$ and $V_1\subseteq V_2$ are $\T[i]$-linear subspaces, then $[U_1,V_1]\ \subseteq\ [U_2,V_2]$) gives 
$\mathcal{G}^{(k)}\subseteq \mathcal{D}_k(\mathcal{G})$ for all $k$. Hence if $\mathcal{D}_m(\mathcal{G})=\{\E\}$, then $\mathcal{G}^{(m)}=\{\E\}$. 
For context, the same implication holds in the classical theory of Lie algebras and groups.

\begin{example}
\begin{itemize}
    \item[(a)] Let $A \in M_n(\T[i])$ be nilpotent. Then the polynomial algebra $\T[i]_0[A]$ is a nilpotent algebra.
    \item[(b)] Define $\mathcal{U}_n \subset M_n(\T[i])$ to be the class of strictly upper triangular matrices. Then $\mathcal{U}_n$ is a nilpotent algebra.
\end{itemize}
\end{example}

\section{Lie's Theorem for Supertropical Algebra}

\begin{lemma}\label{lem:elemnilp}
Let $\mathcal{G}\subseteq M_n(\T[i])$ be a nilpotent Lie algebra.
Then every $A\in\mathcal{G}$ is a nilpotent matrix.
\end{lemma}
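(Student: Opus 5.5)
The plan is to iterate the bracket of $A$ with itself and compare the result with powers of $A$. Fix $A\in\mathcal{G}$ and define $A_0=A$ and $A_k=[A,A_{k-1}]$ for $k\ge1$.

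\textbf{Step 1: the iterates lie in the lower central series.} By induction on $k$, we have $A_k\in D^{k}(\mathcal{G})$. The base case is $A_0=A\in\mathcal{G}=D^0(\mathcal{G})$. For the inductive step, $A\in\mathcal{G}$ and $A_{k-1}\in D^{k-1}(\mathcal{G})$, so $A_k=[A,A_{k-1}]\in[\mathcal{G},D^{k-1}(\mathcal{G})]=D^k(\mathcal{G})$. Nilpotency of $\mathcal{G}$ gives $m$ with $D^m(\mathcal{G})=\{\E\}$, and therefore $A_m=\E$.

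\textbf{Step 2: compute the iterates as powers.} We have $A_1=A\otimes A\oplus A\otimes A=A^2$, because $\oplus$ is idempotent in $\T[i]$: $\max$ is idempotent in each coordinate. Suppose $A_{k-1}=A^{k}$. Then
\[
A_k=A\otimes A^{k}\oplus A^{k}\otimes A=A^{k+1}\oplus A^{k+1}=A^{k+1}.
\]
This uses associativity of matrix multiplication over $\T[i]$, which follows from $\T[i]$ being a semiring. By induction, $A_k=A^{k+1}$ for all $k$. Combined with Step 1, this gives $A^{m+1}=A_m=\E$, so $A$ is nilpotent.

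\textbf{Where care is needed.} The only delicate point is the semiring structure of $\T[i]$. One has to check that $\otimes$ on $\T[i]$ is associative and distributes over $\oplus$, so that matrix powers are well defined and $A\otimes A^k=A^k\otimes A$. This is a routine verification, since $\T[i]$ is the semiring $\T[x]/(x^2=0)$ written with a max-plus grading. Idempotency of $\oplus$ then makes the bracket collapse to a power, so no cancellation issues arise. As an optional cross-check, Theorem~\ref{thm:nilpDAG} confirms that nilpotency here is equivalent to $G_A$ being acyclic, but the direct argument above does not need it.
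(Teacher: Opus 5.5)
Your proof is correct and is essentially the paper's argument: the paper likewise uses idempotency of $\oplus$ to get $[A,A^k]=A^{k+1}$, concludes by induction that $A^{k+1}\in D^{k}(\mathcal{G})$, and hence $A^{m+1}=\E$ once $D^m(\mathcal{G})=\{\E\}$. Your explicit check of associativity in $\T[i]$ is a point the paper leaves implicit.
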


\begin{proof}
Fix $A\in\mathcal{G}$. Note that
\[
[A,A]=A\otimes A\oplus A\otimes A=A^{2},
\]
since $\oplus$ is idempotent. Hence $A^{2}\in D^{1}(\mathcal{G})$.

Inductively, assume $A^{k}\in D^{k-1}(\mathcal{G})$. Then
\[
[A,A^{k}]
=A\otimes A^{k}\oplus A^{k}\otimes A
=A^{k+1}\oplus A^{k+1}
=A^{k+1},
\]
so $A^{k+1}\in D^{k}(\mathcal{G})$.

If $D^{m}(\mathcal{G})=\{\E\}$, then $A^{m+1}\in D^{m}(\mathcal{G})$ forces $A^{m+1}=\E$,
hence $A$ is nilpotent.
\end{proof}

\begin{lemma}\label{lem:twoway}
Let $\mathcal{G}\subseteq M_n(\T[i])$ be a nilpotent Lie algebra, and let $A,B\in\mathcal{G}$.
Then there do not exist vertices $v\neq w$ such that there is a directed path from $v$ to $w$
in $G_A$ and a directed path from $w$ to $v$ in $G_B$.
\end{lemma}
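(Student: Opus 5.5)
Assume for contradiction that $v\neq w$, that $G_A$ contains a directed path $v=u_0\to u_1\to\cdots\to u_p=w$ with $p\ge 1$, and that $G_B$ contains a directed path $w=z_0\to z_1\to\cdots\to z_q=v$ with $q\ge 1$. The key point is that the bracket $[X,Y]=X\otimes Y\oplus Y\otimes X$ involves no cancellation: every term is an $\oplus$-sum of walk products, and by Observation~\ref{obs:nozerodiv} a product of non-$\eps$ entries is non-$\eps$. Since $\oplus$ is a coordinatewise maximum, a sum is non-$\eps$ as soon as one summand is. So the support of a bracket contains the support of each of its products, namely
\[
G_{[X,Y]}\supseteq G_{X\otimes Y}\cup G_{Y\otimes X}.
\]
Moreover, $(X\otimes Y)_{ac}\neq\eps$ whenever there is $b$ with $x_{ab}\neq\eps$ and $y_{bc}\neq\eps$.

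The plan is to produce a nonzero element in every term $D^k(\mathcal{G})$ of the lower central series, which contradicts nilpotency. First I will use monotonicity of supports together with the inclusion above. The sum $C:=A\oplus B$ lies in $\mathcal{G}$ because $\mathcal{G}$ is a linear subspace, and its graph satisfies $G_C=G_A\cup G_B$ (a max of two entries is $\eps$ only if both are $\eps$). Hence $G_C$ contains the closed walk $v\to\cdots\to w\to\cdots\to v$ of length $p+q\ge 2$. The walk passes through two distinct vertices, so it contains a genuine directed cycle. Theorem~\ref{thm:nilpDAG} then says $C$ is not nilpotent. But $C\in\mathcal{G}$, and Lemma~\ref{lem:elemnilp} says every element of a nilpotent Lie algebra is nilpotent, which is a contradiction.

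So the argument reduces to the following three steps.
\begin{enumerate}
\item $C=A\oplus B\in\mathcal{G}$, by linearity of $\mathcal{G}$.
\item $G_{A\oplus B}=G_A\cup G_B$. This holds because $x\oplus y=\eps$ in $\T[i]$ forces $x=y=\eps$: both the real part and the $i$-part are maxima in $\T$, and such a maximum equals $-\infty$ only if both arguments do.
\item The concatenation of the two paths is a closed walk of positive length in $G_C$. Any closed walk of positive length contains a directed cycle (possibly a self-loop), so by Theorem~\ref{thm:nilpDAG} $C$ is not nilpotent, contradicting Lemma~\ref{lem:elemnilp}.
\end{enumerate}

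The main obstacle is making sure the ``no cancellation'' facts genuinely hold in $\T[i]$. In particular, Observation~\ref{obs:nozerodiv} must be valid for the twisted product, whose real part is $(a\otimes c)\oplus(b\otimes d)$. One should check that for $x=a+ib\neq\eps$ and $y=c+id\neq\eps$ some cross term is finite: if $a,c$ are finite the real part is finite, if $a,d$ are finite the $i$-part is finite, and so on. This is already given as an earlier result, so I will simply cite it. The only new ingredient is the additive fact in step 2, which is immediate from the coordinatewise definition of $\oplus$. I do not expect the argument to need the bracket or the lower central series beyond what is already packaged in Lemma~\ref{lem:elemnilp}.
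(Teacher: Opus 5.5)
Your proof is correct, but it takes a different route from the paper's. The paper argues multiplicatively. With $l,m$ the lengths of the two paths, it notes $(A^{l})_{vw}\neq\eps$ and $(B^{m})_{wv}\neq\eps$, forms $[A^{l},B^{m}]\in\mathcal{G}$, and observes that its $(v,v)$-entry contains the term $(A^{l})_{vw}\otimes(B^{m})_{wv}\neq\eps$. That element therefore has a self-loop and cannot be nilpotent, contradicting Lemma~\ref{lem:elemnilp}.

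You argue additively. Since $A\oplus B\in\mathcal{G}$ and $G_{A\oplus B}=G_A\cup G_B$, the two paths splice into a closed walk, hence a cycle, in the graph of a single element of $\mathcal{G}$. What each route buys:
\begin{itemize}
\item Your version is shorter. It avoids the step, left implicit in the paper, that $A^{l},B^{m}\in\mathcal{G}$; that step comes from $A^{k+1}=[A,A^{k}]$ in the proof of Lemma~\ref{lem:elemnilp}.
\item Your version shows the lemma holds for any $\oplus$-closed set of nilpotent matrices. It is the same $\oplus$-sum device the paper uses to build the dominant element $C$ in the proof of Theorem~\ref{thm:lie-T[i]}.
\item The paper's version relies on bracket closure instead of additive closure. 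It lands directly on a diagonal entry, so no cycle needs to be extracted from a closed walk.
\end{itemize}

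One presentational point: your opening paragraph plays no role in the argument you actually give and can be deleted. This covers the inclusion $G_{[X,Y]}\supseteq G_{X\otimes Y}\cup G_{Y\otimes X}$ and the stated plan of finding a nonzero element in every $D^{k}(\mathcal{G})$.
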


\begin{proof}
Assume, for contradiction, that such $v,w$ exist. Let $l$ be the length of a directed path
from $v$ to $w$ in $G_A$ and $m$ the length of a directed path from $w$ to $v$ in $G_B$.
Then $(A^{l})_{vw}\neq\eps$ and $(B^{m})_{wv}\neq\eps$ by the path-expansion property of tropical
matrix products and Observation~\ref{obs:nozerodiv}.

Consider $C=[A^{l},B^{m}]=A^{l}\otimes B^{m}\oplus B^{m}\otimes A^{l}$.
In the $(v,v)$-entry of $A^{l}\otimes B^{m}$, one term is
\[
(A^{l})_{vw}\otimes (B^{m})_{wv}\neq \eps,
\]
again by Observation~\ref{obs:nozerodiv}. Hence $C_{vv}\neq \eps$.
Therefore $G_C$ has a self-loop at $v$, so $C$ is not nilpotent.
But $A^{l},B^{m}\in\mathcal{G}$ and $\mathcal{G}$ is closed under brackets, hence $C\in\mathcal{G}$,
contradicting Lemma~\ref{lem:elemnilp}. Thus no such pair $v,w$ exists.
\end{proof}

 As a result of above Lemmas \ref{lem:elemnilp}, \ref{lem:twoway}, we can define a partial order in any nilpotent algebra $\mathcal{G}$. We say $A \geq B$, if every path in $B$ is a sub-path in $A$. Equivalently (since paths of length $1$ are the edges), $E(G_B)\subseteq E(G_A)$. This guarantees a dominant element in $\mathcal{G}$.

 \begin{theorem}[Lie's Theorem for supertropical algebra] \label{thm:lie-T[i]}
Let $\mathcal{G}\subseteq M_n(\T[i])$ be a nilpotent Lie algebra.
Then there exists a permutation matrix $P$ such that
\[
P^{-1}\otimes \mathcal{G}\otimes P\ \subseteq\ \mathcal{U}_n.
\]
Equivalently, $\mathcal{G}$ is simultaneously conjugate to a Lie subalgebra of strictly upper
triangular matrices.
\end{theorem}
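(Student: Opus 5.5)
The plan is to reduce the statement to a single acyclicity claim about one graph and then invoke Proposition~\ref{lem:toposort}. Define the union digraph $\Gamma$ on $V=\{1,\dots,n\}$ with an edge $u\to v$ whenever $u\to v$ is an edge of $G_A$ for some $A\in\mathcal{G}$. If $\Gamma$ is acyclic (with no self-loops), Proposition~\ref{lem:toposort} gives a bijection $\ell:V\to\{1,\dots,n\}$ with $\ell(u)<\ell(v)$ for every edge of $\Gamma$. Let $P$ be the permutation matrix (entries $0$ and $\eps$) realizing $\ell$, so that $(P^{-1}\otimes A\otimes P)_{\ell(u)\ell(v)}=a_{uv}$. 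Permutation matrices are invertible over $\T[i]$, since $P^{-1}=P^{T}$ holds entrywise because $0\otimes0=0$. Then every nonzero entry of each conjugated $A$ sits strictly above the diagonal, so $P^{-1}\otimes\mathcal{G}\otimes P\subseteq\mathcal{U}_n$. The conjugated set is again closed under $\oplus$, scalars and the bracket, which gives the ``equivalently'' clause.

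It therefore suffices to show that $\Gamma$ has no cycles. Self-loops are excluded because by Lemma~\ref{lem:elemnilp} each $A\in\mathcal{G}$ is nilpotent, and by Theorem~\ref{thm:nilpDAG} $G_A$ then has no self-loops. For longer cycles the cleanest route uses the linear structure rather than pairwise arguments. Suppose $\Gamma$ has a cycle $v_0\to v_1\to\cdots\to v_{m-1}\to v_0$, and let edge $v_j\to v_{j+1}$ come from $A_j\in\mathcal{G}$. Set
\[
S=A_0\oplus A_1\oplus\cdots\oplus A_{m-1}\in\mathcal{G},
\]
using that $\mathcal{G}$ is a $\T[i]$-linear subspace. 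Since $\oplus$ is computed componentwise by $\max$, an entry of $S$ equals $\eps$ only if the corresponding entries of all the $A_j$ equal $\eps$. Hence $E(G_S)=\bigcup_j E(G_{A_j})$, so $G_S$ contains the whole cycle. By Theorem~\ref{thm:nilpDAG}, $S$ is then not nilpotent, contradicting Lemma~\ref{lem:elemnilp}. So $\Gamma$ is a DAG.

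This is the same ``dominant element'' idea the paper hints at before the theorem. Concretely, I would take $S$ to be the $\oplus$ of a finite set of elements of $\mathcal{G}$ whose edge sets cover $E(\Gamma)$; such a set exists because $E(\Gamma)$ is finite. Then $G_S=\Gamma$, and the result reduces to topologically sorting the single nilpotent matrix $S$. The step I expect to need the most care is checking that no cancellation occurs in $\oplus$ over $\T[i]$, i.e.\ that a nonzero entry stays non-$\eps$ under $\oplus$. This holds because in each of its two components $\oplus$ is a $\max$ and $\eps=-\infty$ is the minimum, so $\max(x,\eps)=x$. Lemma~\ref{lem:twoway} would give an alternative argument for $2$-cycles but is not needed in this approach. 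A last routine point is to verify the conjugation formula for $P$ and the direction convention, so that edges map to entries above the diagonal rather than below; if they land below, replace $\ell$ by $n+1-\ell$.
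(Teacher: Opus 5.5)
Your proposal is correct and follows essentially the same route as the paper: the paper forms the dominant element $C=\bigoplus_{(i,j)}A^{(i,j)}\in\mathcal{G}$, whose graph contains every edge of every $G_A$, gets $C$ nilpotent from Lemma~\ref{lem:elemnilp} and hence $G_C$ a DAG by Theorem~\ref{thm:nilpDAG}, and then topologically sorts $G_C$. This is exactly your covering sum $S$ with $G_S=\Gamma$. Your cycle-by-cycle contradiction is a localized form of the same $\oplus$-closure argument, and your checks (no cancellation under $\oplus$, invertibility of $P$, direction convention) fill in details the paper leaves implicit.
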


\begin{proof}
First let us construct the dominant element in $\mathcal{G}$. For each ordered pair $(i,j)$ with $i\neq j$ such that there exists some $A\in\mathcal{G}$ with
$a_{ij}\neq\eps$, choose one witness matrix $A^{(i,j)}\in\mathcal{G}$ having that entry non-$\eps$.
Define
\[
C=\bigoplus_{(i,j)} A^{(i,j)}\in\mathcal{G},
\]
since $\mathcal{G}$ is closed under $\oplus$.
By construction,
\[
c_{ij}\neq\eps \quad\Longleftrightarrow\quad
\exists\,A\in\mathcal{G}\ \text{with}\ a_{ij}\neq\eps.
\]
Hence every edge that appears in some $G_A$ ($A\in\mathcal{G}$) appears in $G_C$, i.e.
\[
E(G_A)\subseteq E(G_C)\qquad\text{for all }A\in\mathcal{G}.
\]

Since $\mathcal{G}$ is nilpotent, Lemma~\ref{lem:elemnilp} implies $C$ is nilpotent.
Therefore, $G_C$ is a DAG by Theorem~\ref{thm:nilpDAG}.
By Proposition~\ref{lem:toposort}, there exists a labeling $\ell:\{1,\dots,n\}\to\{1,\dots,n\}$
such that each edge $u\to v$ of $G_C$ satisfies $\ell(u)<\ell(v)$.
Let $P$ be the permutation matrix implementing this relabeling.

Fix $A\in\mathcal{G}$. If $p\to q$ is an edge of $G_A$, then $p\to q$ is an edge of $G_C$, hence $\ell(p)<\ell(q)$. Thus, in the relabeled matrix $P^{-1}\otimes A\otimes P$, every non-$\eps$ entry lies strictly
above the diagonal, i.e.\ $P^{-1}\otimes A\otimes P\in \mathcal{U}_n$.
Since $A$ was arbitrary, $P^{-1}\otimes\mathcal{G}\otimes P\subseteq \mathcal{U}_n$.
\end{proof}

 \begin{corollary}[Lie's Theorem for max-plus algebra]\label{cor:lie-T}
     Let $\mathcal{G} \subset M_n(\T)$ be a nilpotent Lie algebra, then there is a labeling such that $\mathcal{G} \subset \mathcal{U}_n$. In other words, there exists a $P \in Gl_n(\T)$, such that $P^{-1}\mathcal{G}P \subset \mathcal{U}_n$.
 \end{corollary}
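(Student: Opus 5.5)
The plan is to deduce Corollary~\ref{cor:lie-T} from Theorem~\ref{thm:lie-T[i]} by embedding $\T$ into $\T[i]$ via $a\mapsto a+i\eps$. The first step is to check that this map is an injective semiring homomorphism. Addition is clearly preserved. For multiplication, the formula gives
\[
(a+i\eps)\otimes(c+i\eps)=\bigl((a\otimes c)\oplus(\eps\otimes\eps)\bigr)+i\bigl((\eps\otimes c)\oplus(a\otimes\eps)\bigr)=(a\otimes c)+i\eps .
\]
It also sends $\eps$ to $\eps$ and $0$ to $0+i\eps$, which is the multiplicative identity of $\T[i]$. Applying the map entrywise gives an injective map $\iota:M_n(\T)\to M_n(\T[i])$ that respects $\oplus$, $\otimes$ and scalar multiplication by elements of $\T$. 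Since $\iota$ preserves which entries equal $\eps$, we also have $G_{\iota(A)}=G_A$.

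Next, I pass from the $\T$-subspace $\mathcal{G}$ to a $\T[i]$-subspace. The image $\iota(\mathcal{G})$ is closed under $\oplus$ and the bracket, but it is only a $\T$-subspace. So I let $\widetilde{\mathcal{G}}$ be its $\T[i]$-span, that is, all finite sums $\bigoplus_k x_k\otimes\iota(A_k)$ with $x_k\in\T[i]$ and $A_k\in\mathcal{G}$. Because $\T[i]$ is commutative and the bracket is bilinear, $[\widetilde{\mathcal{G}},\widetilde{\mathcal{G}}]$ is spanned by brackets $[\iota(A),\iota(B)]=\iota([A,B])$. Thus $\widetilde{\mathcal{G}}$ is a Lie algebra. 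By induction on $k$, $D^k(\widetilde{\mathcal{G}})$ is contained in the $\T[i]$-span of $\iota(D^k(\mathcal{G}))$. Hence $D^m(\mathcal{G})=\{\E\}$ forces $D^m(\widetilde{\mathcal{G}})=\{\E\}$, so $\widetilde{\mathcal{G}}$ is nilpotent.

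Theorem~\ref{thm:lie-T[i]} then gives a permutation matrix $P$ with $P^{-1}\otimes\widetilde{\mathcal{G}}\otimes P\subseteq\mathcal{U}_n$. A permutation matrix has entries in $\{0,\eps\}$, so $P$ and $P^{-1}=P^{T}$ lie in $\iota(M_n(\T))$, and $P\in Gl_n(\T)$. Conjugating by $P$ only relabels entries, so for $A\in\mathcal{G}$ the matrix $P^{-1}AP$ has non-$\eps$ entries exactly where $P^{-1}\iota(A)P$ does, namely strictly above the diagonal.

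Alternatively, one could avoid the span altogether: rerun the proof of Theorem~\ref{thm:lie-T[i]} directly over $\T$. That proof only uses closure of $\mathcal{G}$ under $\oplus$, Lemma~\ref{lem:elemnilp} (which needs only idempotence), Theorem~\ref{thm:nilpDAG}, and the absence of zero divisors, and all of these hold verbatim in $\T$. The main obstacle I expect is the scalar-extension step, specifically checking that nilpotency survives passing to the $\T[i]$-span. If that becomes messy, I would fall back on this direct argument.
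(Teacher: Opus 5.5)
Your proof is correct, but your main route differs from the paper's. The paper only remarks that the proof of Theorem~\ref{thm:lie-T[i]} goes through verbatim over $\T$, which is exactly your fallback paragraph.

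Your primary route instead deduces the corollary formally from the theorem. You embed $\T\hookrightarrow\T[i]$ by $a\mapsto a+i\eps$ and pass to the $\T[i]$-span $\widetilde{\mathcal{G}}$ of $\iota(\mathcal{G})$, so that the hypothesis of a $\T[i]$-linear subspace is literally met. You then check by induction, using bilinearity of the bracket and commutativity of $\T[i]$, that $D^k(\widetilde{\mathcal{G}})$ lies in the $\T[i]$-span of $\iota(D^k(\mathcal{G}))$, so nilpotency transfers. All of these steps hold, including $\iota([A,B])=[\iota(A),\iota(B)]$, $G_{\iota(A)}=G_A$, and the pullback of $P$ to a $\T$-permutation matrix. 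So the step you flagged as the main obstacle is already handled by your own induction.

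What your route buys is that the corollary becomes a genuine consequence of the theorem rather than a re-run of its proof. It would survive even if the $\T[i]$ argument were replaced by one that does not transparently specialize to $\T$. The cost is the scalar-extension bookkeeping.

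The paper's direct route is shorter because every ingredient the proof uses holds in $\T$ literally:
\begin{itemize}
\item closure under $\oplus$;
\item idempotence, giving $[A,A^k]=A^{k+1}$;
\item absence of zero divisors;
\item the fact that an $\oplus$-sum containing a non-$\eps$ term is non-$\eps$.
\end{itemize}
So nothing needs to be transported between the two semirings.
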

The proof follows the same argument as the proof of the above Theorem \ref{thm:lie-T[i]}.

\bibliographystyle{amsplain}

\end{document}